\newtheorem{thm}{Theorem}[section]
\newtheorem{lem}[thm]{Lemma}
\newtheorem{prop}[thm]{Proposition}
\begin{document}

\title{Metric Dimensions of Generalized Sierpi\'{n}ski Graphs over Squares}

\author{S. Prabhu $^{a,}$\thanks{Corresponding author: drsavariprabhu@gmail.com}
	\and T. Jenifer Janany $^{a}$
	\and Sandi Klav\v zar $^{b,c,d}$
}
\maketitle

\begin{center}
	$^a$ Department of Mathematics, Rajalakshmi Engineering College, Thandalam, Chennai 602105, India\\
	\medskip
	
	$^b$ Faculty of Mathematics and Physics, University of Ljubljana, Slovenia\\
	\medskip

	$^c$ Institute of Mathematics, Physics and Mechanics, Ljubljana, Slovenia\\
	\medskip
	
	$^d$ Faculty of Natural Sciences and Mathematics, University of Maribor, Slovenia\\
	\medskip
\end{center}

\begin{abstract}
Metric dimension is a valuable parameter that helps address problems related to network design, localization, and information retrieval by identifying the minimum number of landmarks required to uniquely determine distances between vertices in a graph. Generalized Sierpi\'{n}ski graphs represent a captivating class of fractal-inspired networks that have gained prominence in various scientific disciplines and practical applications. Their fractal nature has also found relevance in antenna design, image compression, and the study of porous materials. The hypercube is a prevalent interconnection network architecture known for its symmetry, vertex transitivity, regularity, recursive structure, high connectedness, and simple routing. Various variations of hypercubes have emerged in literature to meet the demands of practical applications. Sometimes, they are the spanning subgraphs of it. This study examines the generalized Sierpi\'{n}ski graphs over $C_4$, which are spanning subgraphs of hypercubes and determines the metric dimension and their variants. This is in contrast to hypercubes, where these properties are inherently complicated. Along the way, the role of twin vertices in the theory of metric dimensions is further elaborated.

\end{abstract}

\medskip\noindent
\textbf{Keywords:} metric dimension; edge metric dimension; fault-tolerant dimension; twin; fractal network; generalized Sierpi\'{n}ski graph
	
\medskip\noindent
\textbf{Mathematics Subject Classification (2020):} 05C12, 68M15
	
\section{Introduction}
Parallel computing is a type of computation in which many calculations or processes are carried out simultaneously. This is in contrast to serial computing, where calculations are performed sequentially, one after the other. Parallel computing offers several advantages, including improved performance, scalability, and the ability to tackle larger and more complex problems \cite{ClThMe06}. However, designing and implementing parallel algorithms can be challenging due to issues such as synchronization, load balancing, and communication overhead \cite{EdAl00}. Graph theory can have a significant impact on parallel computing, particularly in the context of algorithm design and optimization \cite{Ch86}. Graph theory provides a powerful framework for modeling and analyzing the structure and relationships of data, which is essential in many parallel computing applications. Many parallel computing applications involve graph algorithms, which operate on graphs to solve various problems such as shortest path finding, clustering, graph traversal, and network flow optimization \cite{Ch83}. Parallelizing these algorithms efficiently requires an understanding of both the graph structure and the characteristics of parallel computation.

A parallel computing network can indeed be viewed and represented as a graph structure. This representation can provide insights into the topology of the network, the communication patterns between computing nodes, and the distribution of computational tasks \cite{LuTu09}. Each computing unit in the parallel computing network, whether it's a processor core, a compute node, or a server, can be represented as a node in the graph. These nodes represent the individual processing elements that perform computations. The communication links between computing nodes are represented as edges in the graph. These edges depict the connectivity between nodes and can represent various types of communication channels, such as direct interconnects, network links, or shared memory connections. The arrangement of nodes and edges in the graph represents the network's topology. This includes characteristics such as whether the network is a clustered architecture, a mesh, a torus, a hypercube, or another topology  \cite{Du90}. The choice of topology can impact factors like communication latency, bandwidth \cite{MaVaCu97}, and fault tolerance. Graph-based representations can also be used for performance analysis and optimization of parallel computing networks. Techniques such as graph partitioning, load balancing, and routing algorithms \cite{BoHo82} can be applied to improve the efficiency and scalability of parallel computations. By representing a parallel computing network as a graph structure, we can gain insights into its characteristics, connectivity, and performance, which can aid in the design, analysis, and optimization of parallel algorithms and systems.

Graphic structures with self-similarity and recurrence are called fractals. Networks with fractal nature seems to be beneficial in the study of larger networks found in artificial and natural systems, such as neuroscience \cite{FeJe01}, music \cite{LoLo12}, social networking sites, and computers, allowing the subject of network science to progress. Images of complicated structures, such as neuronal dendrites or bacterial growth patterns \cite{ObPfSe90} in culture, can be captured and analysed with the help of such networks. Sierpi\'{n}ski networks are one of that class and can be used in parallel computing systems, particularly in the design of parallel supercomputers and high-performance computing (HPC) clusters. The Sierpi\'{n}ski network is characterized by its recursive and self-similar structure. It consists of interconnected nodes arranged in a hierarchical manner, with each level of the hierarchy representing a smaller version of the overall network. The Sierpi\'{n}ski network is inherently scalable, allowing it to accommodate a large number of nodes or processors. The self-similar structure of the Sierpi\'{n}ski network enables efficient routing and communication patterns. Due to these characteristics, the Sierpi\'{n}ski network can be explored as a potential topology for parallel computing systems, particularly in research contexts where innovative network designs are investigated to improve scalability, performance, and fault tolerance in parallel computing environments. In a particular case when the base graph is $K_3$, the Sierpi\'{n}ski networks are the Tower of Hanoi graphs with 3 pegs. Adding an open link to the extreme vertices of Sierpi\'{n}ski graphs results in WK-recursive networks. As abstract graphs, these networks were introduced in 1988 as message passing architectures and are employed in VLSI implementation~\cite{VeSa88}. A decade later, WK-recursive networks were equipped with the Sierpi\'{n}ski labelings and named Sierpi\'{n}ski graphs~\cite{KlMi97}. From then on the notion of Sierpi\'{n}ski graphs coincides with these graphs equipped with the Sierpi\'{n}ski labeling. The latter has also made it possible to explore Sierpi\'{n}ski graphs in more depth. Let us list some of the related results. This family of networks is proved to be Hamiltonian and the length of geodesic between their vertices are given in \cite{KlMi97}. Some metric properties including the average eccentricity \cite{HiPa12}, connectivity \cite{KlZe18} and median \cite{BaChHi22} are discussed. Some topological descriptors of Siepi\'{n}ski graphs are availabe in \cite{ImSaGa17}. All shortest paths in Sierpi\'{n}ski graphs are discussed in \cite{HiHe14}. The review paper~\cite{HiKlZe17} from 2017 summarizes the results on Sierpi\'{n}ski graphs and related classes of graphs and also proposes a classification of Sierpi\'{n}ski-like graphs. 

The classical Sierpi\'{n}ski graphs as introduced in~\cite{KlMi97} use  complete graphs are their basic stones. Replacing complete graphs by arbitrary graphs, generalized Sierpi\'{n}ski graphs were introduced in~\cite{GrKoPa11}. The key idea is again to create a recursive process that generates a fractal-like pattern within the graph. Generalized Sierpi\'{n}ski graphs are thus a broader class of fractal graphs that extend the concept of Sierpi\'{n}ski graphs to various shapes and structures beyond just complete graphs.

Let $G$ be a graph and $r\ge 1$ a positive integer. Then the {\em generalized Sierpi\'{n}ski graph} $S_G^r$ is formally defined as follows. Set $V(G) = [n] = \{1,\ldots, n\}$. Then $V(S_G^r) = [n]^k$, that is, vertices of $S_G^r$ are vectors of length $r$, each coordinate being a vertex of $G$. Two vertices $x= x_r \ldots x_1$ and $y = y_r \ldots y_1$ of $S_G^r$ are by definition adjacent if the following conditions hold for some index $t \in [r]$: 
\begin{enumerate}
\item[(i)] $x_i = y_i$ for $i >t$, 
\item[(ii)] $x_t \neq y_t$ and $x_ty_t \in E(G)$, 
\item[(iii)] $x_i = y_t$, $y_i = x_t$ for $i <t$.
\end{enumerate}	
In Fig.~\ref{gs}, a graph $G$ with $V(G) = [5]$ and $S_G^2$ are illustrated. 

\begin{figure}[ht!]
	\centering
	\subfloat[]{\includegraphics[scale=0.75]{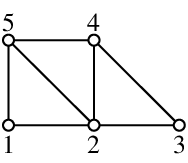}}
	\quad \quad \quad \quad \quad
	\subfloat[]{\includegraphics[scale=0.75]{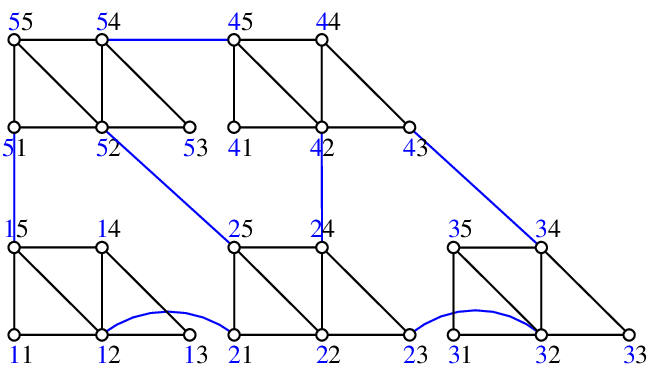}}
	\caption{(a) $S_{G}^1 \cong G$; (b) $S_{G}^2$}
	\label{gs}
\end{figure}

Generalized Sierpi\'{n}ski graphs are used to model polymer networks \cite{EsRo19}. Chromatic, vertex cover, clique and domination numbers \cite{RoRoEs17}, roman domination \cite{RaRoRo17}, strong metric dimension \cite{EsRo17} related results have been discussed for generalized Sierpi\'{n}ski graphs. In this article, we focus on the family $S_{C_4}^r$  and determine their metric dimension, edge metric dimension, fault-tolerant metric dimension and fault-tolerant edge metric dimension. The determination of these formulae is made possible by first examining in more detail the role of twin vertices in these dimensions.  

\section{The Problem and its Relevance}

Sensors can be used to detect faults or anomalies in the nodes or communication links of a parallel computing network. By continuously monitoring system parameters, sensors can help identify hardware failures, software errors, or performance bottlenecks, enabling proactive maintenance and troubleshooting. They can be deployed to detect unauthorized access, malicious activities, or security breaches in a parallel computing network. By monitoring network traffic, system logs, and environmental conditions, sensors can help identify and mitigate security threats in real-time. By collecting and analyzing sensor data, administrators and operators can make informed decisions to optimize system operation and ensure the efficient and reliable execution of parallel computing workloads. While sensor deployment can offer significant benefits in terms of system performance, reliability, and efficiency, it's essential to consider the associated costs, complexities, and trade-offs involved. Knowledge about metric dimension helps in optimizing sensor deployment.

\subsection{Basis and Fault-tolerant Basis}

Graph theory considers networks as topological graphs with interesting characteristics.
Metric dimension is a measure of how efficiently one can locate and distinguish between the vertices (nodes) of a graph using a minimal set of landmarks or reference points \cite{Sl75, HaMe76}. Metric dimension has applications in various fields, including network design \cite{BeEbEr06}, robotics \cite{KhRaRo96}, and location-based services. It helps in understanding how to place sensors or landmarks in a network to ensure efficient location determination \cite{ChZh03, ChErJo00, Jo93, MeTo84, LiAb06}. Calculating the exact metric dimension of a graph is often a computationally challenging problem. For certain classes of graphs, such as trees, there are efficient algorithms to find the metric dimension. However, for general graphs \cite{KhRaRo96}, bipartite graphs \cite{MaAbRa08} and directed graphs \cite{RaRaCy14} determining the metric dimension is NP-hard. Despite the computational difficulty, the precise value of metric dimension is evaluated for many graph structures including honeycomb \cite{MaRaRa08}, butterfly \cite{MaAbRa08}, Bene\v{s} \cite{MaAbRa08}, Sierpi\'{n}ski \cite{KlZe13}, and irregular triangular networks \cite{PrJeAr23}. The works related to metric dimension have been surveyed recently in \cite{TiFrLl23}. 

The distance $d_G(u,v)$ between two vertices in a connected graph $G$, with vertex set $V(G)$ and edge set $E(G)$, is equal to the number of edges in a geodesic (shortest path) connecting them. For a vertex $u$ and an edge $e= vw$ the distance between them is given by, $d_G(u,e) = \min \{d_G(u,v), d_G(u,w)\}$. For an ordered subset $X = \{u_1, u_2, \ldots, u_l\}$ of vertices, every vertex $x$ of $G$ can be represented by a vector of distances 
$$ r(x| X) = (d_G(x,u_1), d_G(x, u_2), \ldots, d_G(x, u_l))\,.$$ 
The subset $X$ is a {\em metric generator} (\textbf{MG}) if $r(x|X) \neq r(y|X)$, $\forall x,y \in V(G)$. In other words, $X$ is a \textbf{MG} if every pair $x,y \in V(G)$ has at least one vertex $u \in X$ such that $d(u,x) \neq d(u,y)$, see Fig.~\ref{rsdefn}. 

\begin{figure}[ht!]
	\centering
	\includegraphics[scale=.55]{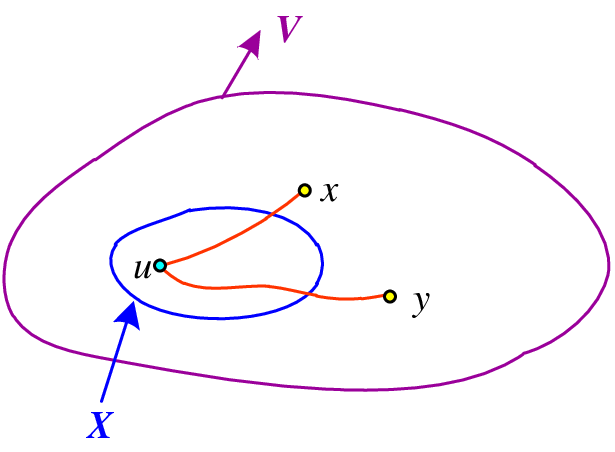}
	\caption {Metric generator (\textbf{MG})}
	\label{rsdefn}
\end{figure}	

A \textbf{MG} with minimum vertices is a ({\em metric}) {\em basis}; its cardinality is indicated by $\dim(G)$, which is the {\em metric dimension}. This concept is well-established in the literature, and has given rise to many different variations \cite{AlSuDa22, CyRaPr23, KeTrYe18, KlKu23}. One of these novel and highly motiated variations is known as fault-tolerant metric dimension. In this variation, the crucial aspect is that the selected set of vertices must still resolve the graph even when any one vertex from that set has become faulty or useless. That is, a set of vertices $F$ is termed as a {\em fault-tolerant metric generator} (\textbf{FTMG}) if, for every vertex $u$ within $F$, the set obtained by removing $u$ from $F$ remains a \textbf{MG} for the graph. In other words, even in the absence of any single vertex in the set $F$, the remaining vertices should still have the ability to resolve the graph.
\begin{figure}[ht!]
	\centering
	\includegraphics[scale=.55]{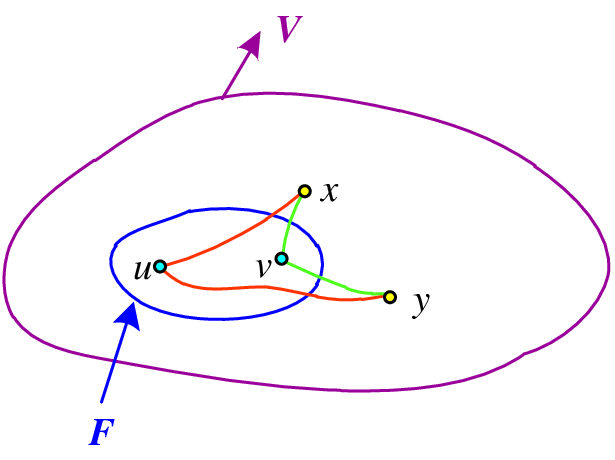}
	\caption {Fault-tolerant metric generator (\textbf{FTMG})}
	\label{ftrsdefn}
\end{figure}
The term {\em fault-tolerant metric dimension} is denoted as $\dim'(G)$, and it represents the minimum number of vertices required to form a \textbf{FTMG} for the graph $G$. The set of vertices that achieves this minimum and serves as a \textbf{FTMG} is referred to as a {\em fault-tolerant metric basis}. \textbf{FTMG} can also be described as a set of vertices $F$, with the property that for every pair of vertices $x$ and $y$ in the graph $G$, there exist at least two vertices $u$ and $v$ in set $F$, where $u$ and $v$ are at different distances from both $x$ and $y$, see Fig.~\ref{ftrsdefn}. 
A \textbf{FTMG} ensures that even if one vertex from the set $F$ is missing, there will still be another vertex in $F$ that can resolve $x$ and $y$ in the graph.  This idea was first presented in \cite{HeMoSl08} and was further discussed in \cite{RaHaPa19, BaSaDa20, PrMaAr22, ArKlPr23}.

\subsection{Edge Basis and Fault-tolerant Edge Basis}

In a parallel computing system, data is transmitted between processors through an interconnection network, which is a complex arrangement of processors and communication links. Identifying and addressing the links (connections) within this network is crucial for quickly pinpointing any faulty connections. This task can be performed optimally by utilizing the smallest possible set of vertices that uniquely label every edge in the network. In this context, a set of vertices $S$ within a graph $G$ is referred to as an {\em edge metric generator} (\textbf{EMG}) if it satisfies the condition that for any two edges $e$ and $f$ in the graph $G$, there exists a vertex $w$ in set $S$ such that $d_G(w,e)$ and $d_G(w,f)$ are distinct, see Fig.~\ref{ersdefn}.

\begin{figure}[ht!]
	\centering
	\includegraphics[scale=.55]{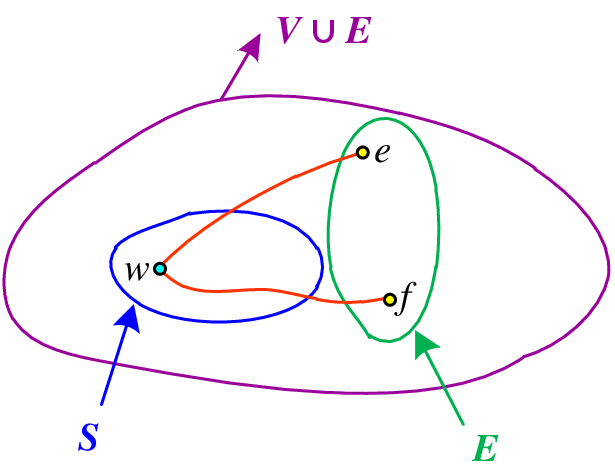}
	\caption {Edge metric generator (\textbf{EMG})}
	\label{ersdefn}
\end{figure}

An \textbf{EMG} ensures that each edge in the network is uniquely identifiable based on its distance to the vertices in set $S$. The number of vertices in the smallest possible \textbf{EMG} is referred to as the {\em edge metric dimension} and is connoted as $\dim_E(G)$. This variation was initially studied by Kelenc et al. \cite{KeTrYe18}, and they established its NP-completeness. Following the inception of this concept, numerous articles have emerged in this research field. To list a few, we have the characterization of graphs with maximum dimension~\cite{Zu18,ZhTaSh19}, the dimension of web graph, prism related graph, convex polytope graph \cite{ZhGa20}, generalized Petersen graph \cite{WaWaZh22}, some classes of planar graph \cite{WeSaSh21}, Erd\H{o}s-Renyi random graph \cite{Zu21}, graph operations such as join, lexicographic, corona \cite{PeYe20}, and hierarchical products \cite{KlTa21} for some graph classes. Identifying graphs with $\dim_E < \dim$ has gained more interest \cite{KnSkYe22, KnMaTo21}.

In order for an edge metric generator, denoted as $F$, to be considered fault-tolerant, it must possess an additional property, which is that the set obtained by removing any vertex $v$ from $F$ must still be capable of resolving the edges in the graph $G$. The minimum number of vertices required to form a {\em fault-tolerant edge metric generator} (\textbf{FTEMG}) is known as the {\em fault-tolerant edge metric dimension}, denoted as $\dim'_E(G)$. As depicted in Fig.~\ref{ftersdefn}, in a \textbf{FTEMG} set $F$, for any pair of edges $e$ and $f$ in the graph $G$, there are at least two vertices $v$ and $w$ within the set $F$ that are at different distances from both $e$ and $f$. 

\begin{figure}[ht!]
	\centering
	\includegraphics[scale=.55]{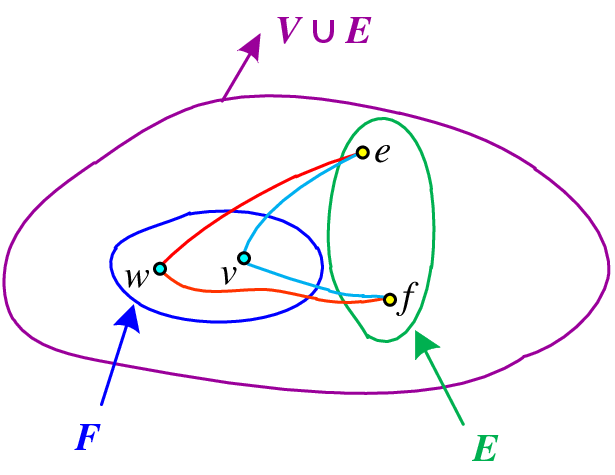}
	\caption {Fault-tolerant edge metric generator (\textbf{FTEMG})}
	\label{ftersdefn}
\end{figure}

\section{Effect of Twin Vertices on Metric Dimensions}
\label{sec:twins}
	
In this section, we will clarify how twins vertices affect all the variations of the metric dimension we are interested in. Some of the results on this are known from before, but others are being added newly. 

Let $G$ be a graph and $u\in V(G)$. Then $N_G(u) =\{v:\ uv \in E(G)\}$ is the open neighbourhood of $u$ and $N_G[u] = N(u) \cup \{u\}$ is the closed neighbourhood of $u$. A vertex $u$ is a {\em twin vertex} if there exists a vertex $v$ such that $N_G(u) = N_G(v)$ or $N_G[u] = N_G[v]$. In the first case we say that $u$ and $v$ are {\em non-adjacent twins} (also known as {\em false twins}), in the second case they are {\em adjacent twins} (also known as {\em true twins}). A maximal set of vertices $T$ in which every two vertices are twins, is called a {\em twin set}. These concepts are illustrated in Fig.~\ref{twindefn}. 

\begin{figure}[ht!] 
	\centering
	\subfloat[]{\includegraphics[scale=0.65]{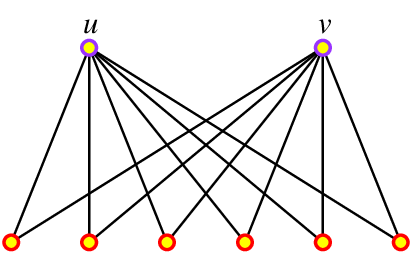}} 
	\quad \quad \quad   
	\subfloat[]{\includegraphics[scale=0.65]{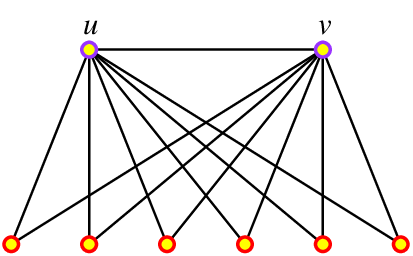}} 
	\quad \quad \quad   
	\subfloat[]{\includegraphics[scale=0.65]{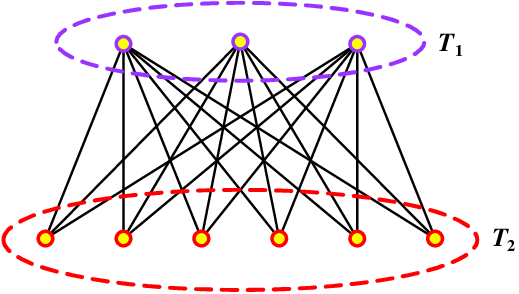}} 
	\caption{(a) Non-adjacent twin;  (b) Adjacent twin; (c) Twin sets } \label{twindefn}
\end{figure}
	
In~\cite[Corollary 2.4]{HeMoPe10} it was observed that if $S$ is a resolving set of a connected graph $G$ and $u$ and $v$ are twins, then $S\cap \{u,v\} \ne \emptyset$, cf.~\cite{PrFlAr18, PrMaAr22}. From this fact, the following conclusions can be easily drawn. 
	
\begin{prop} 
\label{mdlbd}
Let ${T}$ be the set of all twins of a connected graph $G$, and let $\mathcal{T}$ be the partition of ${T}$ into $k$ twin sets. Then the following properties hold. 
\begin{enumerate}[label=\rm {(\roman*)}]
\item {\rm \cite{PrFlAr18}} $\dim(G) \geq |{T}| - k$.
\item {\rm \cite{PrMaAr22}} $\dim'(G) \geq |{T}|$.
\item {\rm \cite{ArKlPr23}} If all twin sets are of cardinality $2$ and  $\dim(G) =k$, then $\dim'(G) =2k$.
\end{enumerate}
\end{prop}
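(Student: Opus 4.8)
The plan is to prove the two bounds $\dim'(G)\ge 2k$ and $\dim'(G)\le 2k$ separately. Since each of the $k$ twin sets has cardinality $2$, we have $|T|=2k$, so the inequality $\dim'(G)\ge 2k$ is precisely part~(ii). Hence the whole task is to exhibit a fault-tolerant metric generator of size $2k$, and the natural candidate is $T$ itself.

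First I would record the elementary fact that if $u$ and $v$ are twins, then $d_G(u,w)=d_G(v,w)$ for every $w\in V(G)\setminus\{u,v\}$ (a shortest $w$--$u$ path can be rerouted through the common neighbourhood to reach $v$ with the same length, and symmetrically). Then I would fix a metric basis $B$ of $G$. Because $|B|=\dim(G)=k$, and because every metric generator meets every twin set by \cite[Corollary~2.4]{HeMoPe10}, the set $B$ contains exactly one vertex from each of the $k$ twin sets; in particular $B\subseteq T$. Write the twin sets as $\{a_j,b_j\}$, $j\in[k]$, labelled so that $B=\{a_1,\dots,a_k\}$, whence $T=\{a_1,b_1,\dots,a_k,b_k\}$.

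The core of the argument is the claim that for every $i\in[k]$ the swapped set $B_i:=(B\setminus\{a_i\})\cup\{b_i\}$ is again a metric generator. Granting this, $T$ is a \textbf{FTMG}: deleting $b_i$ from $T$ leaves a superset of $B$, deleting $a_i$ from $T$ leaves a superset of $B_i$, and any superset of a metric generator is a metric generator; thus $\dim'(G)\le|T|=2k$, which together with the lower bound completes the proof. To prove the claim, assume $r(x\mid B_i)=r(y\mid B_i)$ for some $x\ne y$. Any vertex of $B_i$ is at distance $0$ from itself and at positive distance from everything else, so $x,y\notin B_i$; in particular $b_i\notin\{x,y\}$. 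By the twin-distance fact, every coordinate of $B_i$ other than the $b_i$-coordinate records the same distance to $a_i$ as to $b_i$. Feeding this into the equality $r(x\mid B_i)=r(y\mid B_i)$ and splitting into the cases $a_i\notin\{x,y\}$ and $x=a_i$ (the subcase $y=a_i$ being symmetric in $x,y$), one obtains in each case a pair $x'\ne y'$ --- namely $\{x,y\}$ in the first case and $\{b_i,y\}$ in the second --- with $r(x'\mid B)=r(y'\mid B)$, contradicting that $B$ is a metric generator.

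I expect this final case analysis to be the only genuine obstacle; everything else is bookkeeping. It is also worth noting the degenerate situation $k=0$, where $G$ has no twins and the statement reads $\dim'(G)=0$, which holds (trivially) for the single-vertex graph.
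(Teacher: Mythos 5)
Your argument for item (iii) is correct and complete as far as it goes: the twin-distance fact, the observation that a metric basis of size $k$ must pick exactly one vertex from each of the $k$ disjoint twin pairs (hence $B\subseteq T$), the exchange claim that $B_i=(B\setminus\{a_i\})\cup\{b_i\}$ is again a metric generator (your two-case analysis checks out, including the case $x=a_i$, where the contradicting pair for $B$ is $\{b_i,y\}$), and the conclusion that $T$ itself is a fault-tolerant metric generator of size $2k$. Be aware, though, that the paper does not actually prove Proposition~\ref{mdlbd}: all three items are quoted from the cited references, with only the preceding remark that they follow from the observation of~\cite{HeMoPe10} that any resolving set meets every twin pair. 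So you supply a genuine argument where the paper supplies none --- but only for (iii): you use (ii) as a black box for the lower bound $\dim'(G)\ge|T|=2k$ (which is legitimate, since (ii) is an attributed result), and you never address (i) at all. For a self-contained treatment you should add the two short lower-bound arguments: for (i), a resolving set can omit at most one vertex from each twin set (two omitted twins would have identical distance vectors), giving $\dim(G)\ge\sum_{i=1}^{k}(|\mathcal{T}_i|-1)=|T|-k$; for (ii), a fault-tolerant metric generator $F$ must contain every twin, since if $u\notin F$ had a twin $v$, then either $F$ itself (when $v\notin F$) or $F\setminus\{v\}$ (when $v\in F$) would fail to separate $u$ from $v$. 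These are precisely the arguments the paper does write out later for the edge versions in Proposition~\ref{emdlbd}, so mirroring them here would make your proof of the full proposition complete.
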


In the case of bipartite graphs, (fault-tolerant) metric generators and (fault-tolerant) edge metric generators are closely related as follows, where the first property was proved in~\cite{KeToSk22}.   

\begin{prop} \label{emdubd} 
If $G$ is a connected bipartite graph, then the following properties hold.  
\begin{enumerate}[label=\rm {(\roman*)}]
\item 	A metric generator of $G$ is an edge metric generator of $G$.
\item 	A fault-tolerant metric generator of $G$ is a fault-tolerant edge metric generator of $G$. 
\end{enumerate}
\end{prop}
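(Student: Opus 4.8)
The plan is to treat (i) as the substantive statement and to derive (ii) from it essentially for free. Everything rests on one elementary fact about bipartite graphs: if $e=ab\in E(G)$ and $w\in V(G)$, then $a$ and $b$ lie in different parts of the bipartition, so $d_G(w,a)$ and $d_G(w,b)$ have opposite parities; in particular they are distinct and $\{d_G(w,a),d_G(w,b)\}=\{d_G(w,e),\,d_G(w,e)+1\}$, with the parity of each of these two values determined solely by which parts of the bipartition contain $w$ and contain the endpoint in question.

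For (i) (which is the content of \cite{KeToSk22}, reproduced here for completeness), I would argue by contradiction. Assume $X$ is a metric generator that is not an edge metric generator, so there are distinct edges $e=a_1a_2$ and $f=b_1b_2$ with $d_G(w,e)=d_G(w,f)$ for all $w\in X$. First I would fix a bipartition $(A,B)$ of $G$ and relabel endpoints so that $a_1,b_1\in A$ and $a_2,b_2\in B$. Next, using the fact above, $\{d_G(w,a_1),d_G(w,a_2)\}=\{d_G(w,e),d_G(w,e)+1\}=\{d_G(w,f),d_G(w,f)+1\}=\{d_G(w,b_1),d_G(w,b_2)\}$ for every $w\in X$; since $d_G(w,a_1)$ and $d_G(w,b_1)$ are both distances from $w$ to a vertex of $A$ they share the same parity, and a set of two consecutive integers has a unique member of each parity, so $d_G(w,a_1)=d_G(w,b_1)$, and likewise $d_G(w,a_2)=d_G(w,b_2)$, for every $w\in X$. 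Finally, this says $r(a_1\mid X)=r(b_1\mid X)$, so $a_1=b_1$ because $X$ resolves $G$, and similarly $a_2=b_2$; but then $e=f$, contradicting $e\ne f$.

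For (ii), let $F$ be a fault-tolerant metric generator of $G$. By definition $F\setminus\{v\}$ is a metric generator for every $v\in F$; applying (i) to each such set shows $F\setminus\{v\}$ is an edge metric generator for every $v\in F$, which is exactly what it means for $F$ to be a fault-tolerant edge metric generator. The only place any real care is needed is the parity bookkeeping in (i): one must check that opposite parities allow us to upgrade the equality of two two-element sets to a coordinatewise equality of distances, rather than a mere set equality, and for this it is essential that the parity of $d_G(w,x)$ depends only on the parts containing $w$ and $x$. Once that step is pinned down, the rest is routine, and (ii) is then purely formal.
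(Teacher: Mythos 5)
Your proposal is correct. For part (ii) you argue exactly as the paper does: $F\setminus\{v\}$ is a metric generator for each $v\in F$, hence by (i) an edge metric generator, hence $F$ is a fault-tolerant edge metric generator. The difference is in part (i): the paper does not prove it at all, but simply imports it from \cite{KeToSk22}, whereas you supply a complete self-contained argument via the parity observation that in a connected bipartite graph the two endpoints of an edge are at consecutive distances from any vertex $w$, with the parity of each distance determined by the sides of the bipartition containing $w$ and the endpoint. Your upgrade from the set equality $\{d_G(w,a_1),d_G(w,a_2)\}=\{d_G(w,b_1),d_G(w,b_2)\}$ to the coordinatewise equalities $d_G(w,a_1)=d_G(w,b_1)$ and $d_G(w,a_2)=d_G(w,b_2)$ is exactly the point that needs care, and your parity bookkeeping handles it correctly; the conclusion $r(a_1\mid X)=r(b_1\mid X)$ then forces $a_1=b_1$ (and likewise $a_2=b_2$), so $e=f$. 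This buys the reader a proof that does not depend on an external reference, at the cost of a paragraph; the paper's version is shorter but leaves (i) as a black box.
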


\begin{proof}
As said, (i) is due to~\cite{KeToSk22}. To prove (ii), consider any fault-tolerant metric generator $F$. Since $F \setminus \{u\}$ is a metric generator, it follows from (i)   that $F \setminus \{u\}$ is an edge metric generator. As this holds for every $u \in F$, the set $F$ happens to be a fault-tolerant edge metric generator.
\end{proof}

\begin{lem} \label{twincor}
If $S\subset V(G)$ is an edge metric basis of a connected graph $G$,  and $u$ and $v$ are twins, then $|S\cap \{u,v\}| \ge 1$. Also, if $u\in S$ and $v \notin S$, then $(S \setminus \{u\}) \cup \{v\}$ is another edge metric basis for $G$.
\end{lem}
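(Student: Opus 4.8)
The plan is to exploit the elementary but powerful fact that twins induce a graph automorphism: if $u$ and $v$ are twins in $G$ (false or true), then the transposition $\sigma=(u\,v)$ that swaps $u$ and $v$ and fixes every other vertex is an automorphism of $G$. Indeed, for false twins $N_G(u)=N_G(v)$ guarantees that $\sigma$ preserves adjacency, and for true twins the same follows from $N_G[u]=N_G[v]$. Being an automorphism, $\sigma$ preserves vertex distances, $d_G(\sigma(a),\sigma(b))=d_G(a,b)$, and hence also vertex-to-edge distances, $d_G(\sigma(w),\sigma(e))=d_G(w,e)$ for every vertex $w$ and edge $e$. Everything below is read off from this observation, which is the edge analogue of the argument used for ordinary metric bases in~\cite{HeMoPe10}.

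For the first claim I would argue by contradiction: assume $S$ is an edge metric generator with $u,v\notin S$. Then $\sigma$ fixes every vertex of $S$ pointwise. It remains to produce two distinct edges that $\sigma$ interchanges, for then $d_G(w,e)=d_G(w,\sigma(e))=d_G(w,f)$ for all $w\in S$, contradicting that $S$ resolves edges. To find such a pair, pick a neighbour $z$ of $u$ with $z\neq v$; this exists whenever $|V(G)|\ge 3$, since for false twins $u$ is non-adjacent to $v$, while for true twins $N_G[u]=\{u,v\}$ would force $G=K_2$. Because $u,v$ are twins, $z$ is also a neighbour of $v$, so $e=uz$ and $f=vz$ are distinct edges with $\sigma(e)=f$ (here $\sigma(z)=z$ as $z\notin\{u,v\}$). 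This yields the desired contradiction; the degenerate case $G=K_2$ is handled directly since then $S$ is forced to meet $\{u,v\}$.

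For the second claim, set $S'=(S\setminus\{u\})\cup\{v\}$ and note that $S'=\sigma(S)$, because $\sigma$ moves only $u$ and $v$, while $S\setminus\{u\}$ contains neither. Since $\sigma$ is a distance-preserving bijection on both $V(G)$ and $E(G)$: given edges $e\neq f$, choose $w\in S$ with $d_G(w,e)\neq d_G(w,f)$; then $\sigma(w)\in S'$ and $d_G(\sigma(w),\sigma(e))\neq d_G(\sigma(w),\sigma(f))$, so $S'$ resolves $\sigma(e)$ and $\sigma(f)$. As $\sigma$ ranges over all edge pairs when $e,f$ do, $S'$ is an edge metric generator, and $|S'|=|S|=\dim_E(G)$, hence $S'$ is an edge metric basis. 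I do not expect a genuine obstacle here; the only points needing care are the edge case forcing $G=K_2$ and verifying that the two candidate edges $uz$, $vz$ are really distinct, i.e. that $z\notin\{u,v\}$.
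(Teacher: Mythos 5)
Your proof is correct. The paper's own proof is a single sentence: it observes that for a common neighbour $x$ of the twins $u$ and $v$, the edges $ux$ and $vx$ satisfy $d_G(ux,w)=d_G(vx,w)$ for every $w\in V(G)\setminus\{u,v\}$, and leaves both conclusions to the reader. Your argument for the first claim rests on exactly this pair of edges (your $uz$ and $vz$), so there the two proofs coincide in substance. Where you genuinely diverge is in the exchange property: you package the interchangeability of twins as the transposition automorphism $\sigma=(u\,v)$ and invoke the fact that an automorphism carries edge metric generators to edge metric generators, so $(S\setminus\{u\})\cup\{v\}=\sigma(S)$ is a generator of the same cardinality in one clean stroke. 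The paper's one-liner only directly yields the first claim; without the automorphism viewpoint the swap would require a case analysis showing $d_G(u,e)=d_G(v,e)$ for the edges not incident to $\{u,v\}$ and a separate argument for those that are. Your explicit handling of the degenerate case (true twins with no common neighbour, forcing $G=K_2$) addresses a point the paper silently skips as well. In short: same witnessing edges for the lower bound, but a more structural and more complete justification of the exchange step.
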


\begin{proof}
The lemma follows from the fact that if $x$ is a common neighbor of $u$ and $v$, then $d_G(ux,w) = d_G(vx,w)$ holds for all $w \in V(G) \setminus \{u,v\}$.
\end{proof}

We next state a result for the (fault-tolerant) edge metric dimension parallel to Proposition~\ref{mdlbd}(i) and (ii).  

\begin{prop} 
\label{emdlbd}
If $G$ is a connected graph, $\mathcal{T}$ is the set of all twins in $G$, and $G$ has $k$ twin classes, then the following properties hold. 
\begin{enumerate}[label=\rm {(\roman*)}]
\item 	$\dim_E(G) \geq |\mathcal{T}|-k$.
\item $\dim'_E(G) \geq |\mathcal{T}|$.
\end{enumerate}
\end{prop}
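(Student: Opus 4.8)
The plan is to imitate the proofs of Proposition~\ref{mdlbd}(i) and (ii), with Lemma~\ref{twincor} playing the role that the resolving-set twin property plays there. The first thing I would record is that the fact behind Lemma~\ref{twincor} — a common neighbour $x$ of twins $u,v$ gives $d_G(ux,w)=d_G(vx,w)$ for all $w\in V(G)\setminus\{u,v\}$ — actually forces \emph{every} edge metric generator (not just a basis) to meet $\{u,v\}$. This strengthening costs nothing, since the argument never uses minimality, but it is what makes part (ii) go through.

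For (i) I would take an arbitrary edge metric generator $S$ together with the twin classes $T_1,\dots,T_k$, noting these are pairwise disjoint with union $\mathcal{T}$. Within a fixed $T_i$, no two vertices may simultaneously avoid $S$, as that would contradict the strengthened Lemma~\ref{twincor}; hence $|S\cap T_i|\ge |T_i|-1$. Summing the $k$ disjoint contributions gives $|S|\ge\sum_{i}(|T_i|-1)=|\mathcal{T}|-k$, and specializing $S$ to an edge metric basis yields the bound.

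For (ii) I would take a fault-tolerant edge metric generator $F$. It is in particular an edge metric generator, so from the argument in (i) we get $|F\cap T_i|\ge |T_i|-1$ for each $i$. If equality held for some $i$, I would pick the missing vertex $v\in T_i\setminus F$ and any $w\in T_i\cap F$ (available because every twin class has at least two vertices), and observe that $F\setminus\{w\}$ — still an edge metric generator by fault-tolerance — avoids the two twins $v$ and $w$, a contradiction. Hence $T_i\subseteq F$ for all $i$, so $\mathcal{T}\subseteq F$ and $|F|\ge|\mathcal{T}|$; taking $F$ minimal gives $\dim'_E(G)\ge|\mathcal{T}|$.

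The one place to be careful — and the only thing resembling an obstacle — is precisely the upgrade of Lemma~\ref{twincor} from bases to arbitrary generators, and the implicit assumption that every twin class has at least two vertices (so that the vertex $w$ in part (ii) exists); both are routine but should be stated explicitly. Degenerate graphs such as $K_2$, where the single edge makes the edge-resolving condition vacuous and Lemma~\ref{twincor} does not bite, would be handled separately if one wished to cover them.
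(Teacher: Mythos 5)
Your proof is correct and follows essentially the same route as the paper: part (i) counts $|S\cap T_i|\ge |T_i|-1$ over the disjoint twin classes via Lemma~\ref{twincor}, and part (ii) shows $\mathcal{T}\subseteq F$ by removing a twin from $F$ and invoking fault-tolerance to reach the same contradiction. Your explicit remarks that the twin argument applies to arbitrary edge metric generators (not just bases) and that $K_2$ is a degenerate case are careful touches the paper leaves implicit, but they do not change the argument.
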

	
\begin{proof}
(i) Let $S\subseteq V(G)$ be an edge metric basis of $G$ and let $\mathcal{T}_i$, $i\in [k]$, be the twin classes of $G$. Applying Lemma~\ref{twincor} to each pair of vertices from $\mathcal{T}_i$ we infer that $|S\cap \mathcal{T}_i| \ge |\mathcal{T}_i| - 1$. It follows that $\dim_E(G) \geq \sum_{i=1}^k (|\mathcal{T}_i|-1) = \left(\sum_{i=1}^k |\mathcal{T}_i|\right) - k = |\mathcal{T}|-k$. 
	
(ii) Let $S$ be a fault-tolerant edge metric generator of $G$. We claim that $\mathcal{T}\subseteq S$. Suppose on the contrary that there exists a twin vertex $u$ such that $u \notin S$. Let $v$ be a twin of $u$ and let $x$ be a common neighbor of $u$ and $v$. If $v\notin S$, then the distance between the edges $ux$ and $vx$ is the same to each vertex of $S$, which is clearly not possible. And if $v\in S$, then $S\setminus \{v\}$ is an edge metric generator, but then we get the same contradiction. This proves the claim and hence the assertion. 
\end{proof}

\section{Dimensions of $S_{C_4}^r$}

In this section we apply the result from the previous section to determine the four dimensions studied here of the generalized Sierpi\'nski graphs $S_{C_4}^r$, see Fig.~\ref{sc4} for $S_{C_4}^1$, $S_{C_4}^2$, and $S_{C_4}^3$. 

\begin{figure}[ht!]
	\centering
	\subfloat[]{\includegraphics[scale=0.85]{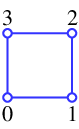}}
	\quad \quad
	\subfloat[]{\includegraphics[scale=0.85]{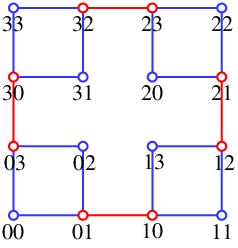}}
	\quad \quad
	\subfloat[]{\includegraphics[scale=0.85]{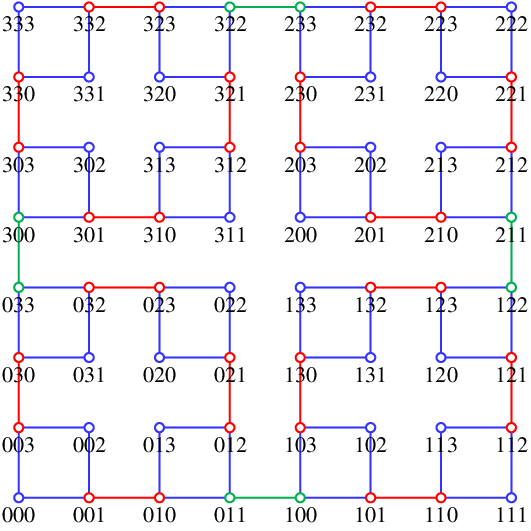}}
	\caption{(a) $S_{C_4}^1$; (b) $S_{C_4}^2$; (c) $S_{C_4}^3$}
	\label{sc4}
\end{figure}

Let $V(C_4) = \{0,1,2,3\}$ and let $r\ge 1$ be a fixed integer. Then, by the definition of the generalized Sierpi\'nski graphs $S_{C_4}^r$, the vertex set $V(S_{C_4}^r)$ partitions into subsets $V_i(S_{C_4}^r)$, $i\in \{0,1,2,3\}$, where the vertices of $V_i(S_{C_4}^r)$ start with $i$. That is, $V_i(S_{C_4}^r) = iV(S_{C_4}^{r-1})$, where we use the convention that if $X$ is a set of strings, then $iX$ is the set of strings derived by prefixing $i$ in each string from $X$. Moreover, for $r\ge 2$, the induced subgraphs $S_{C_4}^r[V_i(S_{C_4}^r)]$ are isometric subgraphs of $S_{C_4}^r$ and are isomorphic to $S_{C_4}^{r-1}$.

\begin{thm} \label{main}
If $r \geq 2$, then the following hold. 
\begin{enumerate}[label=\rm {(\roman*)}]
\item $\dim(S_{C_4}^r) = \dim_E(S_{C_4}^r) = \frac{4}{3}(2+4^{r-2})$. 
\item $\dim'(S_{C_4}^r) = \dim_E'(S_{C_4}^r)= \frac{8}{3}(2+4^{r-2})$.
\end{enumerate}
\end{thm}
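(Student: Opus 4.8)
The plan is to prove the four equalities by combining a twin-counting lower bound (from Section~\ref{sec:twins}) with a matching explicit construction, and to get the edge and fault-tolerant versions almost for free once the vertex metric dimension is settled. The first step is to identify the twin structure of $S_{C_4}^r$. In $C_4$ itself, vertices $0$ and $2$ are non-adjacent twins, and so are $1$ and $3$; this pattern lifts recursively. Concretely, one shows that a vertex $x = x_r\ldots x_1$ has a twin obtained by toggling its last coordinate between the two vertices of $C_4$ not equal to $x_2$ that are non-adjacent to each other — i.e. the twin classes of $S_{C_4}^r$ are exactly the pairs $\{x_r\ldots x_2 0,\, x_r\ldots x_2 2\}$ and $\{x_r\ldots x_2 1,\, x_r\ldots x_2 3\}$, provided the string $x_r\ldots x_2$ is "legal" (a geodesic prefix), together with possibly a few extreme vertices needing separate bookkeeping. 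Counting these pairs gives a number $k$ of twin classes, all of size $2$, and $|\mathcal{T}| = 2k$. A careful count should yield $k = \frac{4}{3}(2 + 4^{r-2})$, so that Proposition~\ref{mdlbd}(i) and Proposition~\ref{emdlbd}(i) give $\dim(S_{C_4}^r),\ \dim_E(S_{C_4}^r) \ge |\mathcal{T}| - k = k = \frac{4}{3}(2+4^{r-2})$, and Proposition~\ref{mdlbd}(ii), Proposition~\ref{emdlbd}(ii) give $\dim'(S_{C_4}^r),\ \dim_E'(S_{C_4}^r) \ge |\mathcal{T}| = 2k = \frac{8}{3}(2+4^{r-2})$.

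For the upper bound on $\dim$, the natural move is induction on $r$ using the self-similar decomposition $V(S_{C_4}^r) = \bigcup_{i=0}^3 V_i(S_{C_4}^r)$ with each $S_{C_4}^r[V_i] \cong S_{C_4}^{r-1}$ being an isometric subgraph. One picks, in each of the four copies, a metric basis of $S_{C_4}^{r-1}$ — but one may omit the landmarks in some copies because vertices in different copies are already separated by the "connector" vertices (the extreme vertices $i^{(r)} = ii\ldots i$ and their neighbours used to glue the copies), exploiting that distances between copies route through a small number of bridge edges. The bookkeeping here is exactly the $\frac{4}{3}(2+4^{r-2})$ versus $\frac{4}{3}(2+4^{r-3})$ recursion; one verifies the closed form satisfies $a_r = $ (appropriate combination of $a_{r-1}$'s minus savings) and checks the base case $r = 2$ by hand on $S_{C_4}^2$, where $\frac{4}{3}(2+1) = 4$. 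Then $\dim(S_{C_4}^r) \le \frac{4}{3}(2+4^{r-2})$, matching the lower bound; since $S_{C_4}^r$ is bipartite (it is a spanning subgraph of a hypercube, as emphasized in the abstract), Proposition~\ref{emdubd}(i) upgrades the constructed metric basis to an edge metric generator, giving $\dim_E(S_{C_4}^r) \le \dim(S_{C_4}^r)$, and together with the twin lower bound $\dim_E \ge k$ this forces equality in (i).

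For (ii), the lower bounds $\dim' \ge 2k$ and $\dim_E' \ge 2k$ are already in hand. For the upper bound, one checks that the whole twin set $\mathcal{T}$ — that is, both vertices of every twin pair — is itself a fault-tolerant metric generator: removing any one vertex still leaves one representative of every twin pair, and one argues (again by the isometric-subgraph decomposition, or by invoking Proposition~\ref{mdlbd}(iii) once $\dim = k$ and all twin classes have size $2$ are established) that such a set resolves all vertex pairs even after a deletion. Proposition~\ref{mdlbd}(iii) in fact gives $\dim'(S_{C_4}^r) = 2k = \frac{8}{3}(2+4^{r-2})$ immediately from part (i), and bipartiteness plus Proposition~\ref{emdubd}(ii) turns the fault-tolerant metric basis into a fault-tolerant edge metric generator, so $\dim_E'(S_{C_4}^r) \le 2k$, matching the lower bound from Proposition~\ref{emdlbd}(ii).

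The main obstacle I expect is the precise identification and enumeration of the twin classes: one must prove that toggling the last coordinate really does produce a twin (which requires that the resulting string is a legal vertex and that its neighbourhood genuinely coincides), that there are no other twins, and that no "extreme" vertices spoil the count — and then push the resulting recursion to the stated closed form $\frac{4}{3}(2+4^{r-2})$. The companion obstacle is the upper-bound construction: showing that one can economize landmarks across the four subcopies, i.e. that the bridge vertices separate cross-copy pairs without extra cost, so that the construction meets the lower bound exactly rather than overshooting it. Once those two combinatorial facts are nailed down, the edge and fault-tolerant statements follow formally from Propositions~\ref{mdlbd}, \ref{emdubd}, and \ref{emdlbd} together with bipartiteness.
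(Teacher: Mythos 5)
Your overall strategy is the same as the paper's: a twin-counting lower bound via Propositions~\ref{mdlbd} and~\ref{emdlbd}, a recursive construction of a resolving set of matching size across the four isometric copies of $S_{C_4}^{r-1}$, and then bipartiteness (Proposition~\ref{emdubd}) plus Proposition~\ref{mdlbd}(iii) to transfer everything to the edge and fault-tolerant versions. That closing step is carried out correctly. However, the two combinatorial facts that you yourself flag as the main obstacles are exactly where the content of the proof lies, and as stated your description of the twin structure is not just unverified but wrong. You claim the twin classes are ``exactly the pairs $\{x_r\ldots x_2 0, x_r\ldots x_2 2\}$ and $\{x_r\ldots x_2 1, x_r\ldots x_2 3\}$''; if both antipodal pairs of every bottom-level $4$-cycle were twin pairs this would give $2\cdot 4^{r-1}$ classes, far more than $\frac{4}{3}(2+4^{r-2})$. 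In fact, writing $w=x_r\ldots x_2$, the two vertices $w(x_2+1)$ and $w(x_2-1)$ have degree $3$ (they carry the level-$2$ connector edges), so at most the pair $\{wx_2,\,w(x_2+2)\}$ can be a twin pair, and even this fails whenever $wx_2$ picks up a higher-level connector edge, i.e.\ whenever the maximal constant run $x_2=\cdots=x_j$ is followed by a letter $x_{j+1}$ adjacent to $x_j$ in $C_4$. The paper's characterization (a vertex is a twin iff it has degree $2$ and lies on a $4$-cycle with another degree-$2$ vertex) encodes precisely this, and counting the good prefixes $w$ gives $4+\sum_{j=2}^{r-1}4\cdot 4^{r-j-1}=\frac{4}{3}(2+4^{r-2})$. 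Without this correction your lower bound is not established.

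The upper bound has a similar gap: ``one may omit the landmarks in some copies because vertices in different copies are already separated by the connector vertices'' is not yet an argument. The paper's construction takes $iR_{k-1}$ in each of the four copies and deletes exactly two specified vertices from each (namely $i1^{k-2}1, i3^{k-2}1$ or $i0^{k-2}0, i2^{k-2}0$ depending on the parity of $i$), giving the recurrence $|R_k|=4(|R_{k-1}|-2)$; the delicate point is then to show that the two twin pairs whose $R_{k-1}$-representative was deleted from copy $i$ are still separated, which the paper does by exhibiting a neighbouring copy $V_j$ with $d(i\alpha^{k-1}1,x)+2=d(i\alpha^{k-1}3,x)$ for all $x\in V_j$ for one deleted pair and the reverse inequality for the other. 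Also note that the cross-copy case ($u_{k+1}\neq v_{k+1}$) must be handled explicitly, which the paper does by exhibiting landmarks $2^k0,2^k2,3^k1,3^k3,\dots$ lying in $R_{k+1}$; your sketch does not address why the retained landmarks suffice for that case. Finally, a small simplification you missed: once $\dim=k$ with all twin classes of size $2$ is in hand, Proposition~\ref{mdlbd}(iii) gives $\dim'=2k$ outright, so there is no need to argue separately that $\mathcal{T}$ is a fault-tolerant metric generator.
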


\begin{proof}
Let $R_1 = \{0,1\}$ and for $k \geq 2$ set 
	\begin{align*}
		R_k =\ & 0R_{k-1} \setminus \{01^{k-2}1, 03^{k-2}1\}\ \cup \\ 
		      & 1R_{k-1} \setminus \{10^{k-2}0, 12^{k-2}0\}\ \cup \\
		      & 2R_{k-1} \setminus \{21^{k-2}1, 23^{k-2}1\}\ \cup \\ 
		      & 3R_{k-1} \setminus \{30^{k-2}0, 32^{k-2}0\}\,.
	\end{align*}
The first three of these sets are thus 
\begin{align*}
R_1 = & \{0,1\}, \\
R_2 = & \{00, 11, 20, 31\}, \\
R_3 = & \{000, 020, 111, 131, 200, 220, 311, 331\},
\end{align*}
while the vertices from the set $R_4$ are marked in Fig.~\ref{S4C4twins1}. 

\begin{figure}[ht!]
	\centering
	\includegraphics[scale=0.76]{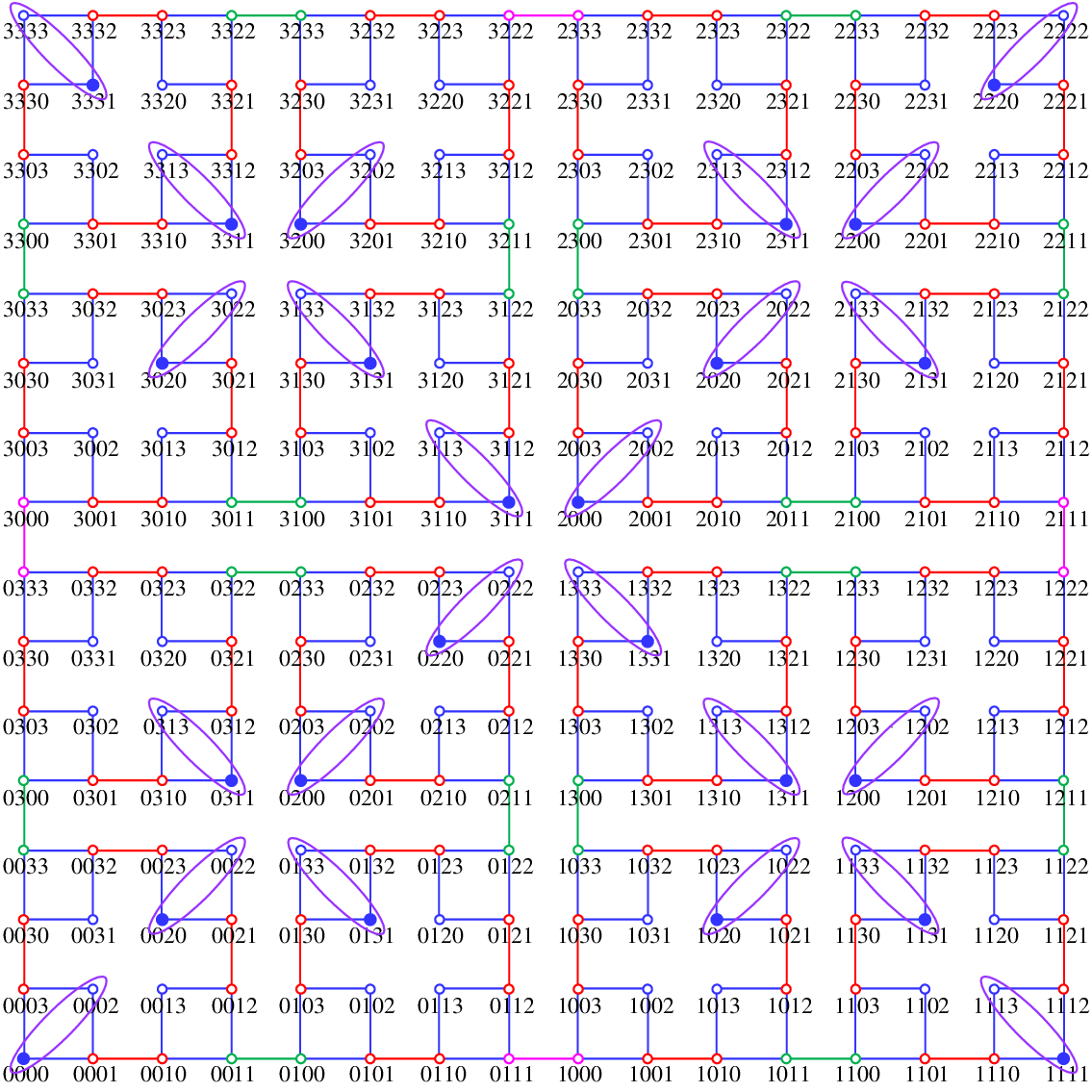}
	\caption {Twins in $S_{C_4}^4$ and the set $R_4$}
	\label{S4C4twins1}
\end{figure}

We claim that $R_k$ is a resolving set of $S_{C_4}^k$ and proceed by induction on $k$. Since $S_{C_4}^1 \cong C_4$ and $R_1 = \{0,1\}$, the claim holds true for $k=1$. We can also easily verify directly that the claim is true for $k\in \{2,3\}$. So it remains to verify that $R_{k+1}$, $k\ge 3$, is a resolving set of $S_{C_4}^{k+1}$. Consider arbitrary vertices $u = u_{k+1} \ldots u_1$ and $v = v_{k+1} \ldots v_1$ of $S_{C_4}^{k+1}$. 

Assume first that $u_{k+1} \neq v_{k+1}$. When $u_{k+1}=0$ and $v_{k+1} \in \{1,2,3\}$, then $d(u,x) > d(v,x)$ for $x \in \{2^{k}0, 2^{k}2\}$. Similarly, when $u_{k+1} = 1$ and $v_{k+1} \in \{0,2,3\}$, then $d(u,x) > d(v,x)$ for $x \in \{3^{k}1, 3^{k}3\}$. Also, if $u_{k+1} =2$, then $x \in \{0^{k}0, 0^{k}2\}$ and if $u_{k+1} =3$, then $x \in \{1^{k}1, 1^{k}3\}$ satisfies the inequality $d(u,x) >d(v,x)$. We can conclude that if $u_{k+1} \neq v_{k+1}$, then the vertices $u$ and $v$ are resolved by $R_{k+1}$. 

Assume second that $u_{k+1} = v_{k+1}$. We may without loss of generality assume that $u_{k+1} = v_{k+1} = 0$. By the induction assumption and the structure of $S_{C_4}^{k+1}$, the set $0R_k$ resolves $V_0(S_{C_4}^{k+1})$.  By the construction, $01^{k-1}1, 03^{k-1}1 \in 0R_{k}$ and $01^{k-1}1, 03^{k-1}1 \notin R_{k+1}$. Thus $r(01^{k-1}1 | R_{k+1} \cap V_0(S_{C_4}^{k+1})) = r(01^{k-1}3 | R_{k+1} \cap V_0(S_{C_4}^{k+1})) \neq r(03^{k-1}1 | R_{k+1} \cap V_0(S_{C_4}^{k+1})) = r(03^{k-1}3 | R_{k+1} \cap V_0(S_{C_4}^{k+1}))$. However, $d(01^{k-1}1, x) +2 = d(01^{k-1}3, x)$ and $d(03^{k-1}1, x) = d(03^{k-1}3, x) +2$ for every $x \in V_2(S_{C_4}^{k+1})$. We can thus conclude that $R_{k+1}$ is a resolving set of $S_{C_4}^{k+1}$. 

If $r\ge 2$, then clearly $|R_r| = 4(|R_{r-1}|-2)$. Resolving this recurrence relation we get $|R_r|= \frac{4}{3}(2+4^{r-2})$ for $r\geq 2$. Thus, $\dim(S_{C_4}^r) \leq \frac{4}{3}(2+4^{r-2})$. On the other hand, using induction we infer that a vertex $u$ of $S_{C_4}^{r}$, where $r\ge 2$, is a twin vertex if and only if its degree is $2$ and lies in a 4-cycle containing another vertex of degree $2$ (which is the twin of $u$). Moreover, each twin set is of cardinality $2$ and contains exactly one vertex of $R_r$; see Fig.~\ref{S4C4twins1} again. Hence by Proposition~\ref{mdlbd}(i) we have $\dim(S_{C_4}^r) \geq \frac{4}{3}(2+4^{r-2})$, hence we may conclude that $\dim(S_{C_4}^r) = \frac{4}{3}(2+4^{r-2})$. From here, and recalling the fact that each twin set is of cardinality $2$, Proposition~\ref{mdlbd}(iii) yields $\dim'(S_{C_4}^r) =  \frac{8}{3}(2+4^{r-2})$. 

As $S_{C_4}^r$ is bipartite, the already established fact that $\dim(S_{C_4}^r) = \frac{4}{3}(2+4^{r-2})$ together with Proposition~\ref{emdubd}(i) yields $\dim_E(S_{C_4}^r) \le \frac{4}{3}(2+4^{r-2})$. But then, analogously as in the above paragraph we get by Proposition~\ref{emdlbd}(i) that $\dim_E(S_{C_4}^r) = \frac{4}{3}(2+4^{r-2})$ and then by Proposition~\ref{emdlbd}(ii) and Proposition~\ref
{emdubd}(ii) that $\dim_E'(S_{C_4}^r) = \frac{8}{3}(2+4^{r-2})$. 
\end{proof}

\section{Comparison and Future Direction}

\begin{figure}[ht!] 
	\centering
	\subfloat[]{\includegraphics[scale=.85]{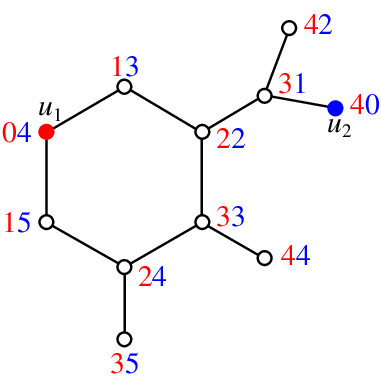}}
	\quad \quad \quad \quad
	\subfloat[]{\includegraphics[scale=.85]{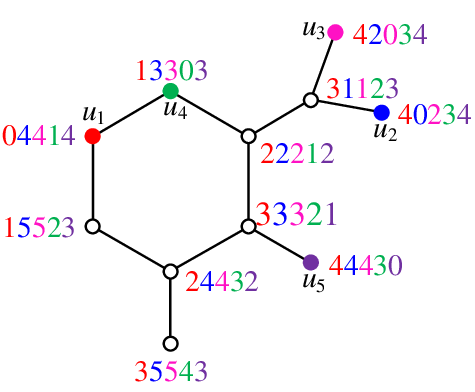}}
	\caption{(a) Metric basis with two elements;  (b) Fault-tolerant metric basis with five elements} \label{fteg}
\end{figure}

Assignment of tasks to efficient labour gives rise to yet another problem of job allocation. When a person is not able to perform the task assigned to him, it is very difficult to identify an efficient and equivalent skilful person to complete the task. This is sometimes easy if all of them are equally trained. Let us consider the situation of basis in a complete graph. If an element of the basis of a complete graph becomes faulty, then the node outside the basis can perform the task of the faulty one. This happens due to the availability of the node outside the basis. But, in general, we cannot expect this to happen on any basis of an arbitrary graph. On the other side of the problem, if one node of the basis is faulty, then precisely one node that is the best replacement for that node exists. If this happens for every member of the basis, then the fault-tolerant basis is twice that of the basis. This may motivate us to conclude $\dim'(G)\leq 2 \dim(G)$. But in reality, this is not true. Fig.~\ref{fteg} shows a graph $G$ with $\dim'(G) > 2 \dim(G)$. It is clear from the figure  that the node $u_2$ and node $u_3$ best replace each other whereas for the node $u_1$ we require $u_4$ and $u_5$ as the replacement nodes. Characterization of graphs which admits $\dim'(G) = 2 \dim(G)$ were reported in \cite{ArKlPr23} and the authors proved that this holds for a certain fractal cubic network. This paper investigates yet another graph class for which $\dim'(G) = 2 \dim(G)$. The graphical comparison on $\dim$, and $\dim'$ over the node set cardinality is depicted in Fig.~\ref{compare}(a). Similarly for $\dim_E$ and $\dim_E'$ over the edge set cardinality is depicted in Fig.~\ref{compare}(b).
\begin{figure}[ht!] 
	\centering
	\subfloat[]{\includegraphics[scale=0.34]{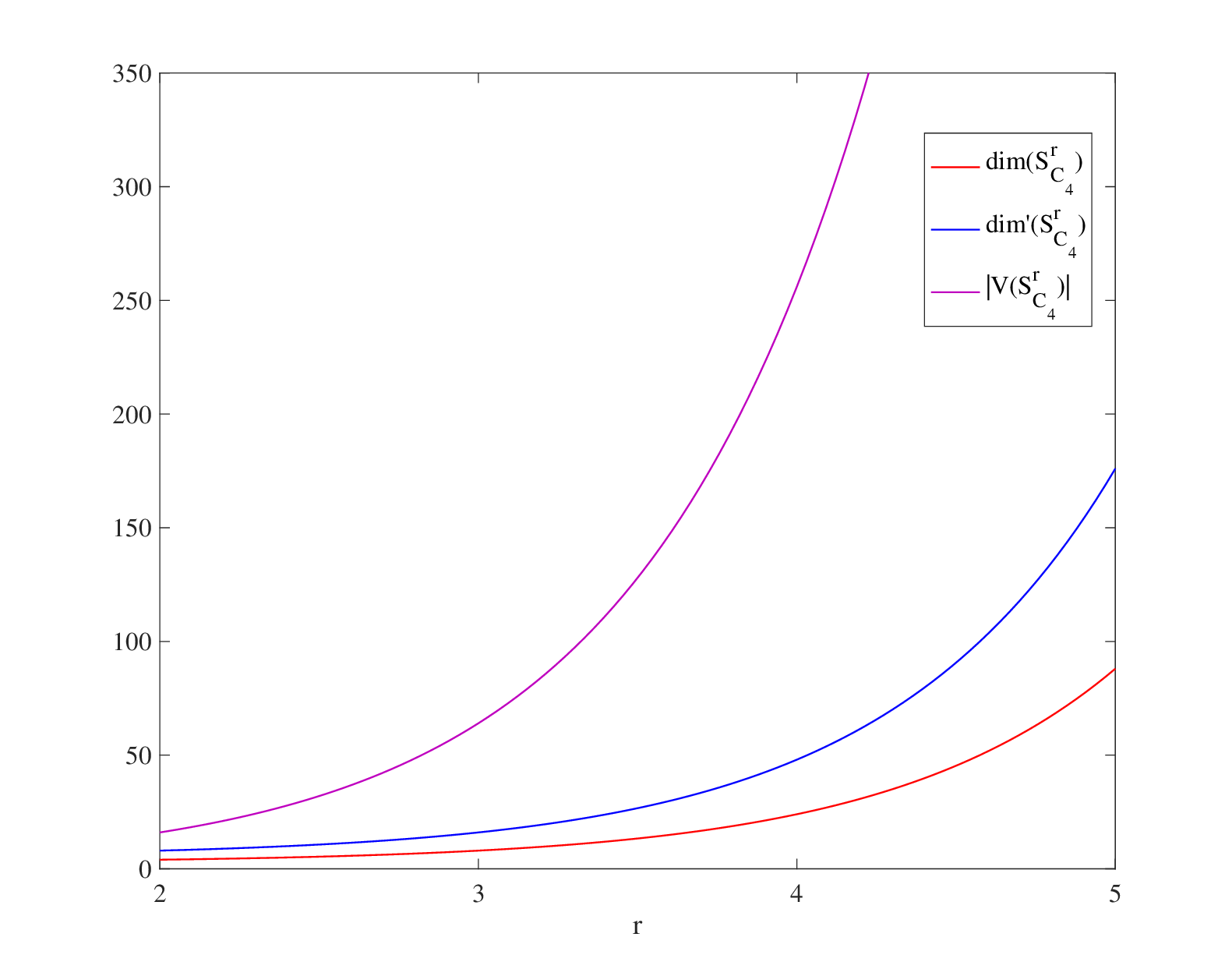}} \quad
	\subfloat[]{\includegraphics[scale=0.34]{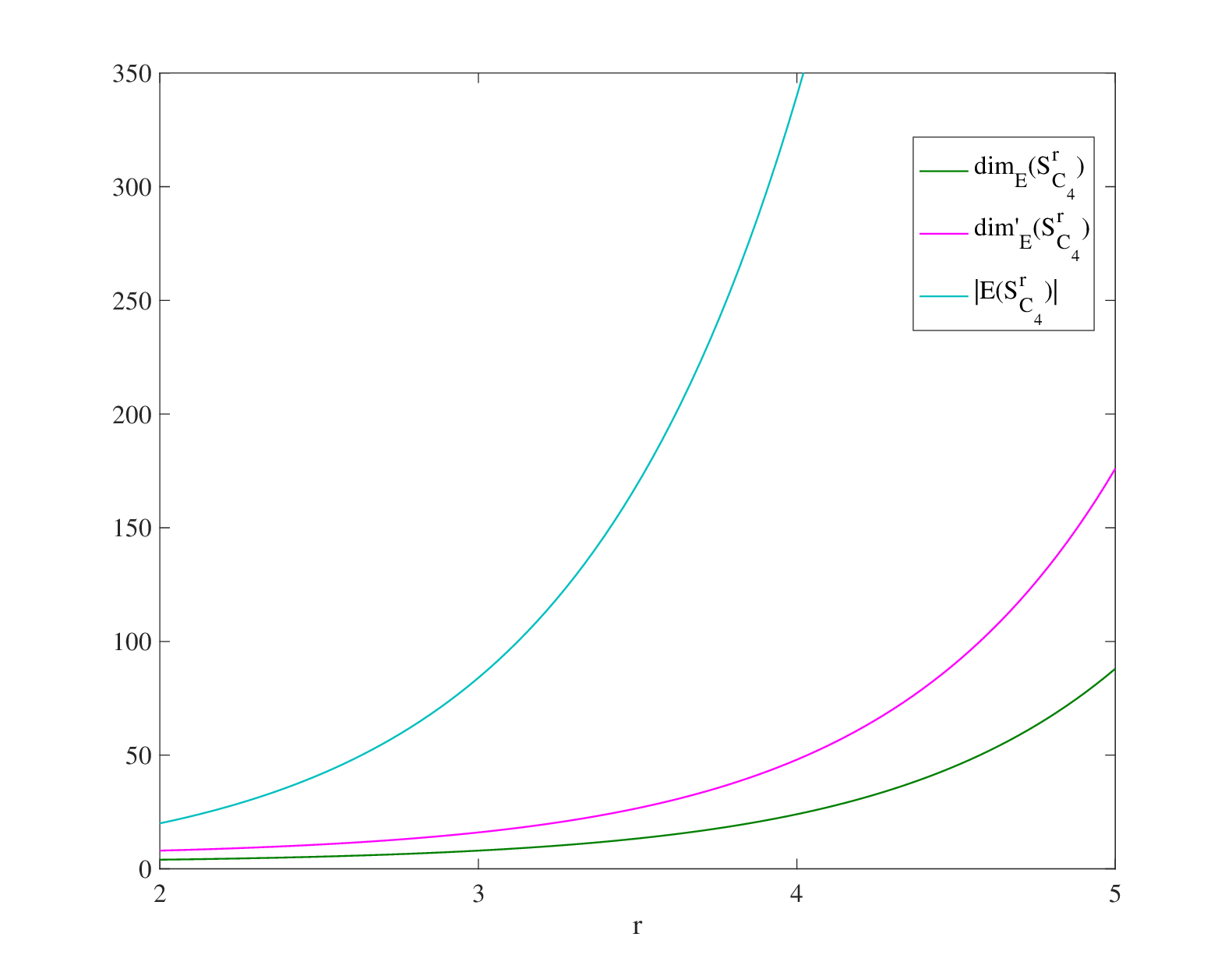}}
	\caption{(a) Comparison between the order of $S_{C_4}^r$, $\dim(S_{C_4}^r)$ and $\dim'(S_{C_4}^r)$;  (b) Comparison between the  size of $S_{C_4}^r$, $\dim_E(S_{C_4}^r)$ and $\dim'_E(S_{C_4}^r)$} \label{compare}
\end{figure}

\section{Conclusion}

The metric dimension of a graph is a measure of how many vertices (or nodes) are needed to uniquely identify all other vertices in the graph based on distance information. By identifying a small set of nodes that can uniquely identify all other nodes in the network, routing algorithms can be optimized to minimize message overhead and latency. By studying the metric dimension, researchers can analyze the network's resilience to node or link failures and develop strategies for fault recovery and rerouting in parallel computing systems. Insights gained from studying the metric dimension of a network can provide valuable insights into the network's characteristics and behavior, which can be leveraged to optimize routing algorithms, and enhance overall performance in parallel computing systems. In some cases, sensors may be placed in the nodes of a parallel computing network for various purposes, depending on the specific requirements of the application and the goals of the system. In this paper, the metric dimension, edge metric dimension and their fault-tolerant versions are investigated for the generalized Sierpi\'{n}ski graphs derived from $C_4$. This family of graphs have an intresting relation with the widely known interconnection network, hypercube. The $r$-dimensional generalized Sierpi\'{n}ski graph $S_{C_4}^r$ is a spanning subgraph of the $2r$ dimensional hypercube.

\section*{Acknowledgements}

Sandi Klav\v zar was supported by the Slovenian Research Agency ARIS (research core funding P1-0297 and projects J1-2452, N1-0285).


\begin{thebibliography}{0}
	
		\bibitem{AlSuDa22}
		R. Alfarisi, L. Susilowati, D. Dafik and S. Prabhu,
		\newblock Local multiset dimension of amalgamation graphs,
		\newblock F1000Research
		\newblock \textbf{12} (2023) 95.
		
		\bibitem{ArKlPr23}
		M. Arulperumjothi, S. Klav\v{z}ar and S. Prabhu,
		\newblock Redefining fractal cubic networks and determining their metric
		dimension and fault-tolerant metric dimension,
		\newblock Applied Mathematics and Computation
		\newblock \textbf{452} (2023) 128037.
		
		\bibitem{BaChHi22}
		K. Balakrishnan, M. Changat, A.M. Hinz and D.S. Lekha,
		\newblock The median of Sierpi\'{n}ski graphs,
		\newblock Discrete Applied Mathematics
		\newblock \textbf{319} (2022) 159--170.
		
		\bibitem{BaSaDa20}
		M. Basak, L. Saha, G.K. Das and K. Tiwary,
		\newblock Fault-tolerant metric dimension of circulant graphs $C(1,2,3)$,
		\newblock Theoretical Computer Science
		\newblock \textbf{817} (2020) 66--79.
		
		\bibitem{BeEbEr06}
		Z. Beerliova, F. Eberhard, T. Erlebach, A. Hall, M. Hoffmann, M. Mihal'\'{a}k and L.S. Ram,
		\newblock Network discovery and verification,
		\newblock IEEE Journal on Selected Areas in Communications
		\newblock \textbf{24} (2006) 2168–2181.
		
		\bibitem{BoHo82}
		A. Borodin and J.E. Hopcroft,
		\newblock Routing, merging and sorting on parallel models of computation,
		\newblock STOC '82: Proceedings of the fourteenth annual ACM symposium on Theory of computing
		\newblock (1982) 338–344.
		
		\bibitem{ChErJo00}
		G. Chartrand, L. Eroh, M.A. Johnson and O.R. Oellermann,
		\newblock Resolvability in graphs and the metric dimension of a graph,
		\newblock Discrete Applied Mathematics
		\newblock \textbf{105} (2000) 99--113.
		
		\bibitem{ChZh03}
		G. Chartrand and P. Zhang,
		\newblock The theory and applications of resolvability in graphs, a survey,
		\newblock Congressus Numerantium
		\newblock \textbf{160} (2003) 47--68.
		
		\bibitem{Ch86}
		M.C. Chen,
		\newblock A design methodology for synthesizing parallel algorithms and architectures,
		\newblock Journal of Parallel and Distributed Computing
		\newblock \textbf{3}(4) (1986) 461--491.
		
		\bibitem{Ch83}
		T.Y. Cheung,
		\newblock Graph traversal techniques and the maximum flow problem in distributed computation,
		\newblock IEEE Transactions on Software Engineering
		\newblock \textbf{SE-9}(4) (1983) 504--512.
		
		\bibitem{ClThMe06}
		P.J. Cleall, H.R. Thomas, T.A. Melhuish and D.H. Owen,
		\newblock Use of parallel computing and visualisation techniques in the simulation of large scale geoenvironmental engineering problems,
		\newblock Future Generation Computer Systems
		\newblock \textbf{22}(4) (2006) 460--467.
		
		\bibitem{CyRaPr23}
		V.J.A. Cynthia, M. Ramya and S. Prabhu,
		\newblock Local metric dimension of certain classes of circulant networks,
		\newblock Journal of Advanced Computational Intelligence and Intelligent Informatics
		\newblock \textbf{27}(4) (2023) 554--560.
		
		\bibitem{Du90}
		R. Duncan,
		\newblock A survey of parallel computer architectures,
		\newblock Computer
		\newblock \textbf{23}(2) (1990) 5--16.
		
		\bibitem{EdAl00}
		W.M. Eddy and M. Allman,
		\newblock Advantages of parallel processing and the effects of communications time,
		\newblock (2000) No. NASA/CR-2000-209455.

		\bibitem{EsRo17}
		E. Estaji and J.A. Rodr\'{i}guez-Vel\'{a}zquez,
		\newblock The strong metric dimension of generalized {S}ierpi\'{n}ski graphs
		with pendant vertices,
		\newblock Ars Mathematica Contemporanea
		\newblock \textbf{12} (2017) 127--134.
		
		\bibitem{EsRo19}
		A. Estrada-Moreno and J.A. Rodr\'{i}guez-Vel\'{a}zquez,
		\newblock On the general randi\'{c} index of polymeric networks modelled by
		generalized {S}ierpi\'{n}ski graphs,
		\newblock Discrete Applied Mathematics
		\newblock \textbf{263} (2019) 140--151.
		
		\bibitem{FeJe01}
		E. Fern\'{a}ndez and H.F. Jelinek,
		\newblock Use of fractal theory in neuroscience: Methods, advantages, and potential problems,
		\newblock Methods
		\newblock \textbf{24}(4) (2001) 309--321.
		
		\bibitem{GrKoPa11}
		S. Gravier, M. Kov\v{s}e and A. Parreau,
		\newblock Generalized {S}ierpi\'{n}ski graphs,
		\newblock {\em in: EuroComb 2011 (Poster), Rényi Institute, Budapest}
		\newblock page URL http://www.renyi.hu/conferences/ec11/posters/parreau.pdf, 2011.
		
		\bibitem{HaMe76}
		F. Harary and R.A. Melter,
		\newblock On the metric dimension of a graph,
		\newblock Ars Combinatoria
		\newblock \textbf{2} (1976) 191--195.
		
		\bibitem{HeMoPe10}
		C. Hernando, M. Mora, I.M. Pelayo, C. Seara and D.R. Wood,
		\newblock Extremal graph theory for metric dimension and diameter,
		\newblock Electronic Journal of Combinatorics
		\newblock \textbf{17} (2010) R30.
		
		\bibitem{HeMoSl08}
		C. Hernando, M. Mora, P.J. Slater and D.R. Wood,
		\newblock Fault-tolerant metric dimension of graphs,
		\newblock Convexity in Discrete Structures
		\newblock \textbf{5} (2008) 81--85.
		
		\bibitem{HiHe14}
		A.M. Hinz and C.H. {auf der Heide},
		\newblock An efficient algorithm to determine all shortest paths in
		{S}ierpi\'{n}ski graphs,
		\newblock Discrete Applied Mathematics
		\newblock \textbf{177} (2014) 111--120.
		
		\bibitem{HiKlZe17}
		A.M. Hinz, S. Klav\v{z}ar and S.S. Zemlji\v{c},
		\newblock A survey and classification of {S}ierpi\'{n}ski-type graphs,
		\newblock Discrete Applied Mathematics
		\newblock \textbf{217} (2017) 565--600.
		
		\bibitem{HiPa12}
		A.M. Hinz and D. Parisse,
		\newblock The average eccentricity of {S}ierpi\'{n}ski graphs,
		\newblock Graphs and Combinatorics
		\newblock \textbf{28}(5) (2012) 671--686.
		
		\bibitem{ImSaGa17}
		M. Imran, {Sabeel-e-Hafi}, W. Gao and M.R. Farahani,
		\newblock On topological properties of {S}ierpi\'{n}ski networks,
		\newblock Chaos, Solitons \& Fractals
		\newblock \textbf{98} (2017) 199--204.
		
		\bibitem{Jo93}
		M. Johnson,
		\newblock Structure-activity maps for visualizing the graph variables arising in drug design,
		\newblock Journal of Biopharmaceutical Statistics
		\newblock \textbf{3}(2) (1993) 203--236.
		
		\bibitem{KeTrYe18}
		A. Kelenc, N. Tratnik and I.G. Yero,
		\newblock Uniquely identifying the edges of a graph: The edge metric dimension,
		\newblock Discrete Applied Mathematics
		\newblock \textbf{251} (2018) 204--220.
		
		\bibitem{KeToSk22}
		A. Kelenc, A.T.M. Toshi, R. \v{S}krekovski and I.G. Yero,
		\newblock On metric dimensions of hypercubes,
		\newblock Ars Mathematica Contemporanea
		\newblock \textbf{23}(2) (2022) P2.08.
		
		\bibitem{KhRaRo96}
		S. Khuller, J.B. Raghavachari and A. Rosenfeld,
		\newblock Landmarks in graphs,
		\newblock Discrete Applied Mathematics
		\newblock \textbf{70}(3) (1996) 217--229.
		
		\bibitem{KlKu23}
		S. Klav\v{z}ar and D. Kuziak,
		\newblock Nonlocal metric dimension of graphs,
		\newblock Bulletin of the Malaysian Mathematical Sciences Society
		\newblock \textbf{46} (2023) 66.
		
		\bibitem{KlMi97}
		S. Klav\v{z}ar and U. Milutinovi\'{c},
		\newblock Graphs ${S}(n, k)$ and a variant of the tower of {H}anoi problem,
		\newblock Czechoslovak Mathematical Journal
		\newblock \textbf{47}(1) (1997) 95--104.
		
		\bibitem{KlTa21}
		S. Klav\v zar and M. Tavakoli,
		\newblock Edge metric dimensions via hierarchical product and integer linear programming,
		\newblock Optimization Letters
		\newblock \textbf{15} (2021) 1993--2003.
		
		\bibitem{KlZe13}
		S. Klav\v{z}ar and S.S. Zemlji\v{c},
		\newblock On distances in {S}ierpi\'{n}ski graphs: {A}lmost-extreme vertices and metric dimension,
		\newblock Applicable Analysis and Discrete Mathematics
		\newblock \textbf{7} (2013) 72--82.
		
		\bibitem{KlZe18}
		S. Klav\v{z}ar and S.S. Zemlji\v{c},
		\newblock Connectivity and some other properties of generalized {S}ierpi\'{n}ski graphs,
		\newblock Applicable Analysis and Discrete Mathematics
		\newblock \textbf{12}(2) (2018) 401--412.
		
		\bibitem{KnMaTo21}
		M. Knor, S. Majstorovi\'{c}, A.T.M. Toshi, R. \v{S}krekovski and I.G. Yero,
		\newblock Graphs with the edge metric dimension smaller than the metric dimension,
		\newblock Applied Mathematics and Computation
		\newblock \textbf{401} (2021) 126076.
		
		\bibitem{KnSkYe22}
		M. Knor, R. \v{S}krekovski and I.G. Yero,
		\newblock A note on the metric and edge metric dimensions of 2-connected graphs,
		\newblock Discrete Applied Mathematics
		\newblock \textbf{319} (2022) 454--460.
		
		\bibitem{LiAb06}
		K. Liu and N. Abu-Ghazaleh,
		\newblock Virtual coordinates with backtracking for void traversal in geographic routing,
		\newblock ADHOC-NOW 2006, Lecture Notes in Computer Science
		\newblock \textbf{4104} (2006) 45--59.
		
		\bibitem{LoLo12}
		O. L\'{o}pez-Ortega and S.I. L\'{o}pez-Popa,
		\newblock Fractals, fuzzy logic and expert systems to assist in the construction of musical pieces,
		\newblock Expert Systems with Applications
		\newblock \textbf{39}(15) (2012) 11911--11923.
		
		\bibitem{LuTu09}
		D. L\"{u}dtke and D. Tutsch,
		\newblock The modeling power of CINSim: Performance evaluation of interconnection networks,
		\newblock Computer Networks
		\newblock \textbf{53} (2009) 1274--1288.
		
		\bibitem{MaRaRa08}
		P. Manuel, B. Rajan, I. Rajasingh and M.C. Monica,
		\newblock On minimum metric dimension of honeycomb networks,
		\newblock Journal of Discrete Algorithms
		\newblock \textbf{6} (2008) 20--27.
		
		\bibitem{MaAbRa08}
		P.D. Manuel, M.I. Abd-El-Barr, I. Rajasingh and B. Rajan,
		\newblock An efficient representation of benes networks and its applications,
		\newblock Journal of Discrete Algorithms
		\newblock \textbf{6}(1) (2008) 11--19.
		
		\bibitem{MaVaCu97}
		R.P. Martin, A.M. Vahdat, D.E. Culler and T.E. Anderson,
		\newblock Effects of communication latency, overhead, and bandwidth in a cluster architecture,
		\newblock ACM SIGARCH Computer Architecture News
		\newblock \textbf{25}(2) (1997) 85--97.
		
		\bibitem{MeTo84}
		R.A. Melter and I. Tomescu,
		\newblock Metric bases in digital geometry,
		\newblock Computer Vision, Graphics, and Image Processing
		\newblock \textbf{25}(1) (1984) 113--121.
		
		\bibitem{ObPfSe90}
		M. Obert, P. Pfeifer and M. Sernetz,
		\newblock Microbial growth patterns described by fractal geometry,
		\newblock Journal of Bacteriology
		\newblock \textbf{172}(3) (1990) 1180--1185.
		
		\bibitem{PeYe20}
		I. Peterin and I.G. Yero,
		\newblock Edge metric dimension of some graph operations,
		\newblock Bulletin of the Malaysian Mathematical Sciences Society
		\newblock \textbf{43} (2020) 2465--2477.
		
		\bibitem{PrFlAr18}
		S. Prabhu, T. Flora and M. Arulperumjothi,
		\newblock On independent resolving number of TiO$_2[m,n]$ nanotubes,
		\newblock Journal of Intelligent \& Fuzzy Systems
		\newblock \textbf{35}(6) (2018) 6421--6425.
		
		\bibitem{PrJeAr23}
		S. Prabhu, D.S.R. Jeba, M. Arulperumjothi and S. Klav\v{z}ar,
		\newblock Metric Dimension of Irregular Convex Triangular Networks,
		\newblock AKCE International Journal of Graphs and Combinatorics
		\newblock (2023) DOI: 10.1080/09728600.2023.2280799.
		
		\bibitem{PrMaAr22}
		S. Prabhu, V. Manimozhi, M. Arulperumjothi and S. Klav\v zar,
		\newblock Twin vertices in fault-tolerant metric sets and fault-tolerant metric dimension of multistage interconnection networks,
		\newblock Applied Mathematics and Computation
		\newblock \textbf{420} (2022) 126897.
		
		\bibitem{RaRaCy14}
		B. Rajan, I. Rajasingh, J.A. Cynthia and P. Manuel,
		\newblock Metric dimension of directed graphs,
		\newblock International Journal of Computer Mathematics
		\newblock \textbf{91}(7) (2014) 1397--1406.
		
		\bibitem{RaRoRo17}
		F. Ramezani, E.D. Rodr\'{i}quez-Bazan and J.A. Rodr\'{i}guez-Vel\'{a}zquez,
		\newblock On the roman domination number of generalized {S}ierpi\'{n}ski graphs,
		\newblock Filomat
		\newblock \textbf{31}(20) (2017) 6515--6528.
		
		\bibitem{RaHaPa19}
		H. Raza, S. Hayat and X.F. Pan,
		\newblock On the fault-tolerant metric dimension of certain interconnection networks,
		\newblock Journal of Applied Mathematics and Computing
		\newblock \textbf{60} (2019) 517--535.
		
		\bibitem{RoRoEs17}
		J.A. Rodr\'{i}guez-Vel\'{a}zquez, E.D. Rodr\'{i}quez-Bazan and A. Estrada-Moreno,
		\newblock On generalized {S}ierpi\'{n}ski graphs,
		\newblock Discussiones Mathematicae Graph Theory
		\newblock \textbf{37}(3) (2017) 547--560.
		
		\bibitem{Sl75}
		P.J. Slater,
		\newblock Leaves of trees,
		\newblock Congressus Numerantium
		\newblock \textbf{14} (1975) 549--559.
		
		\bibitem{TiFrLl23}
		R.C. Tillquist, R.M. Frongillo and M.E. Lladser,
		\newblock Getting the lay of the land in discrete space: A survey of metric dimension and its applications,
		\newblock SIAM Review
		\newblock \textbf{65}(4) (2023) 919--962.
		
		\bibitem{VeSa88}
		G.D. Vecchia and C. Sanges,
		\newblock A recursively scalable network VLSI implementation,
		\newblock Future Generation Computer Systems
		\newblock \textbf{4}(5) (1988) 235--243.
		
		\bibitem{WaWaZh22}
		D.G.L. Wang, M.M.Y. Wang and S. Zhang,
		\newblock Determining the edge metric dimension of the generalized Petersen graph $P(n, 3)$,
		\newblock Journal of Combinatorial Optimization
		\newblock \textbf{43} (2022) 460–496.
		
		\bibitem{WeSaSh21}
		C. Wei, M. Salman, S. Shahzaib, M.U. Rehman and J. Fang,
		\newblock Classes of planar graphs with constant edge metric  dimension,
		\newblock Complexity
		\newblock \textbf{2021} (2021) 5599274.
		
		\bibitem{ZhGa20}
		Y. Zhang and S. Gao,
		\newblock On the edge metric dimension of convex polytopes and its related graphs,
		\newblock Journal of Combinatorial Optimization
		\newblock \textbf{39} (2020) 334--350.
		
		\bibitem{ZhTaSh19}
		E. Zhu, A. Taranenko, Z. Shao and J. Xu,
		\newblock On graphs with the maximum edge metric dimension,
		\newblock Discrete Applied Mathematics
		\newblock \textbf{257} (2019) 317--324.
		
		\bibitem{Zu21}
		N. Zublirina,
		\newblock Asymptotic behavior of the edge metric dimension of the random graph,
		\newblock Discussiones Mathematicae Graph Theory
		\newblock \textbf{41}(2) (2021) 589--599.
		
		\bibitem{Zu18}
		N. Zubrilina,
		\newblock On the edge dimension of a graph,
		\newblock Discrete Mathematics
		\newblock \textbf{341} (2018) 2083--2088.
		
\end{thebibliography}
\end{document}